\documentclass[article,11pt,reqno]{amsart}
\usepackage[english]{babel}
\usepackage[utf8]{inputenc}
\usepackage{thm-restate}
\usepackage{amsmath, amssymb, amsthm}
\usepackage[table]{xcolor}
\usepackage{tikz-cd}
\usepackage{graphicx}
\usepackage[]{todonotes}
\usepackage{physics}
\usepackage{setspace}
\usepackage[top=0.85in,bottom=0.85in,left=0.95in,right=0.95in]{geometry}

\usepackage[hidelinks]{hyperref}

\usepackage{caption}
\usepackage{subcaption}
\usepackage{floatrow}
\usepackage{xpatch}

\usepackage{graphicx}
\usepackage{mathrsfs}
\usepackage{bbm}

\usepackage{verbatim}
\usepackage{adjustbox}
\newcommand{\nocontentsline}[3]{}
\newcommand{\tocless}[2]{\bgroup\let\addcontentsline=\nocontentsline#1{#2}\egroup}

\DeclareMathOperator*{\UEG}{UEG}

\newcommand{\N}{\mathbb{N}}

\newcommand{\Z}{\mathbb{Z}}

\newcommand{\cc}{\leftrightarrow}

\newcommand{\Prbcur}{\mathbf{P}}

\usepackage{tikz}
\usepackage{pgfplots}
\usepackage{tikz-3dplot}
\usetikzlibrary{positioning}
\usetikzlibrary{arrows}

\newcommand{\id}{1\! \!1}

\newcommand{\nn}{\mathbf{n}}

\newtheorem{theorem}{Theorem}[section]
\newtheorem{definition}[theorem]{Definition}

\newtheorem{remark}[theorem]{Remark}
\newtheorem{corollary}[theorem]{Corollary}

\newtheorem{question}[theorem]{Question}

\newtheorem{lemma}[theorem]{Lemma}

\newcommand{\dc}{\Prbcur^{\emptyset, \emptyset}}
\usepackage{cleveref}
\usetikzlibrary{positioning,calc}

\newlength\tindent
\makeatletter
\newcommand{\changeoperator}[1]{%
  \csletcs{#1@saved}{#1@}%
  \csdef{#1@}{\changed@operator{#1}}%
}
\newcommand{\changed@operator}[1]{%
  \mathop{%
    \mathchoice{\textstyle\csuse{#1@saved}}
               {\csuse{#1@saved}}
               {\csuse{#1@saved}}
               {\csuse{#1@saved}}%
  }%
}

\usepackage{etoolbox}
\makeatletter
\patchcmd{\@maketitle}{\topskip42\p@}{\topskip10\p@}{}{}
\makeatother

\usetikzlibrary{calc,arrows.meta}

\author{Ulrik Thinggaard Hansen}
\address{Ulrik Thinggaard Hansen \\ Department of Mathematics,
Universität Innsbruck, Technikerstrasse 13, 6020 Innsbruck, Austria }
\email{ulrik.hansen@uibk.ac.at}

\author{Frederik Ravn Klausen}
\address{Frederik Ravn Klausen, University of Cambridge, DPMMS, Cambridge, United Kingdom}
\email{frk23@cam.ac.uk}

\usepackage{sidecap}
\usetikzlibrary{patterns,decorations.pathmorphing,arrows.meta,calc}
\usepackage{wrapfig}
\changeoperator{prod}
\pgfplotsset{compat=1.18}

\usepackage{mdframed}
\mdfdefinestyle{MyFrame}{
    linecolor=blue,
    outerlinewidth=30pt,
    roundcorner=20pt,
    innertopmargin=4pt,
    innerbottommargin=5pt,
    innerrightmargin=20pt,
    innerleftmargin=20pt,
    backgroundcolor=gray!15!white}

\title{\vspace{-5cm}The End of the Double Random Current}

\begin{document}

\maketitle

\begin{abstract}
The Ising correlation function can be expressed in terms of connectivity probabilities of a random graph known as the double random current. In 2016, Duminil-Copin asked whether this random graph only has one topological end.
One-endedness implies that free and wired random-cluster measures are equal and that any translation-invariant Gibbs measure of the Ising model is a convex combination of the $+$ and $-$ Gibbs measures. In the general setting of transitive amenable graphs, these properties have been established by Raoufi, effectively circumventing the question of one-endedness.
In this paper, we prove one-endedness of the double random current on any transitive, amenable one-ended graph and,
relying on Raoufi's work, we answer the question of Duminil-Copin in the affirmative in this general setup. The results of this paper do not give a new proof of uniqueness of random-cluster measure or the characterisations of Gibbs states for the Ising model.
\end{abstract}

\section{Introduction}
The zoo of graphical representations of the Ising model  is vast, including models such as the random-cluster, random current and loop O(1) models, as well as the recent addition of the dual of the random current \cite{lopez2026ising}. These are random graphs which represent correlations in terms of connection probabilities, and more delicately, understanding their coarse stochastic geometry unlocks many qualitative properties of the Ising model. Famously, long-range order on the spin side of the Ising model corresponds to existence of infinite clusters of either the FK or double current representation on the geometric side.
Percolation of the supercritical single random current  or loop O(1) model implies exponential decay of truncated correlations.
Finally, one-endedness of a certain double random current implies a characterization of the translation-invariant Gibbs measures of the Ising model \cite{raoufi2020translation}. For percolation, the similar study of coarse geometry on general transitive graphs has been very fruitful, starting with the seminal paper by Benjamini and Schramm \cite{BS}.

For any locally finite graph $G=(V,E)$, let $\Omega_\emptyset(G)$ be the set of even subgraphs of $G$, that is the set of graphs $(V,F)$, with $F \subseteq E$, such that every vertex in $V$ is incident to an even number of edges in $F$.
On a finite graph $G$, the loop O(1) model with parameter $x \in (0,1)$,  $\ell_{G,x}$, is the percolation measure which assigns every even subgraph $\eta$ a probability proportional to $x^{\abs{\eta}}$, where $\abs{\eta}$ denotes the number of edges in $\eta$. Equivalently, it is Bernoulli percolation at edge parameter $\frac{x}{1+x}$ conditioned to be even.

For any amenable, transitive graph $\mathbb{G}$ and any sequence of finite subgraphs $G_n \nearrow \mathbb{G}$, the local limit of $\ell_{G_n,x}$ exists (see, e.g., \cite[Appendix A]{hansen2026supercriticalloopo1random}) and we denote that  measure by $\ell_{\mathbb{G},x}$.

For percolation measures $\nu_1$ and $\nu_2$, if   $(\eta^1, \eta^2) \sim \nu_1 \otimes \nu_2$, we denote by $\nu_1 \cup \nu_2$ the law of $\eta^1 \cup \eta^2$. That is, $\nu_1\cup \nu_2$ is the law of the max of two independent samples. The double random current is the percolation measure $\Prbcur_{G,x}^{\emptyset,\emptyset} = \ell_{G,x} \cup \ell_{G,x} \cup \mathbb{P}_{G,x^2},$ where  $\mathbb{P}_{G,x^2}$ is Bernoulli percolation with parameter $x^2$. The FK-Ising a.k.a. random-cluster model is
$\phi_{G,x} = \ell_{G,x} \cup \mathbb{P}_{G,x}$.

It is known that these measures have infinite volume limits $\Prbcur_{\mathbb{G},x}^{\emptyset,\emptyset}$, and $\phi_{\mathbb{G},x}^0$ and when $\mathbb{G}$ is transitive and amenable, the infinite cluster is almost surely unique if it exists \cite{aizenman2015random, burton1991topological}.
Whenever $G_n \nearrow \mathbb{G}$, the number of infinite components of $\mathbb{G} \setminus G_n$ is an increasing sequence, so the limit exists and it is independent of the choice of exhaustion. If the limit is $k$, say that $\mathbb{G}$ has $k$ \emph{ends}. By deletion tolerance  $\phi_{\mathbb{G},x}^0$ is almost surely one-ended. The goal of this paper is to show the same for the double random current. 
\begin{theorem}\label{thm:double_current_has_one_end}
Let $\mathbb{G}$ be a transitive, one-ended and amenable graph. Whenever the double random current $\Prbcur^{\emptyset,\emptyset}_{\mathbb{G},x}$  percolates, almost surely it has a unique, one-ended infinite cluster.
\end{theorem}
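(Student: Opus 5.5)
Uniqueness of the infinite cluster is the result of \cite{aizenman2015random, burton1991topological} quoted above, so the task is to show that the unique infinite cluster $\mathcal{C}$ of $\omega\sim\Prbcur^{\emptyset,\emptyset}_{\mathbb{G},x}$ is one-ended. I would argue by contradiction, in the Burton--Keane / Benjamini--Lyons--Peres--Schramm style.

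\textbf{Step 1 (reduction).} Since $\Prbcur^{\emptyset,\emptyset}_{\mathbb{G},x}$ is translation invariant, the number of ends of $\mathcal{C}$ is an invariant random variable. Assume it is at least $2$ with positive probability. Ends of a cluster in an amenable transitive graph are either one, two, or infinitely many. The case of infinitely many ends is excluded by the standard mass-transport argument: it forces a positive density of ``furcation'' points, and these are incompatible with amenability, exactly as for encounter points in Burton--Keane. So it remains to exclude exactly two ends.

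\textbf{Step 2 (a finite cut set).} If $\mathcal{C}$ has two ends, then with positive probability there is a finite box $\Lambda$ with the following property: $\mathcal{C}\setminus\Lambda$ has two infinite components $\mathcal{C}_1,\mathcal{C}_2$ which are connected only through $\Lambda$. Because $\mathbb{G}$ is one-ended, $\mathbb{G}\setminus\Lambda$ has a single infinite component. Hence for every $R$ there is a path in $\mathbb{G}\setminus\Lambda$, outside $\Lambda_R$, joining $\mathcal{C}_1$ to $\mathcal{C}_2$.

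\textbf{Step 3 (local surgery).} I would use a finite-energy / insertion-tolerance property of the double current. The Bernoulli part $\mathbb{P}_{G,x^2}$ gives deletion tolerance for free. Insertion is not automatic for the loop O(1) components, because they must stay even, so I would insert entire cycles. Concretely: if $\mathcal{C}_1$ and $\mathcal{C}_2$ are linked by a $\mathbb{G}$-path $\gamma$ far from $\Lambda$, then $\gamma$, closed up by pieces of $\mathcal{C}_1$, $\mathcal{C}_2$ and $\Lambda$, gives a cycle. Adding this cycle to a loop configuration by symmetric difference has a Radon--Nikodym cost bounded in terms of its length. This cycle-insertion trick is the standard way finite energy is handled for even subgraphs.

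\textbf{Step 4 (contradiction).} Since the cycle closes up through $\Lambda$, the surgery is not local, so a different mechanism is needed. In a two-ended cluster, translation invariance and mass transport yield a positive density of cut edges of $\mathcal{C}$ (the ``pivotal'' edges separating the two ends). I would then show that such a density is incompatible with the structure of the double current. A clean route is the switching lemma / source-parity picture: $\omega$ restricted to $\mathcal{C}$ is the union of two even subgraphs and a Bernoulli configuration. An edge separating two infinite halves cannot be a bridge of an even subgraph when the even subgraph is a limit of finite even subgraphs, because every edge of a finite even graph lies on a cycle. That cycle is either finite or, in the limit, runs to infinity through both halves, and the second option contradicts separation. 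So cut edges of $\mathcal{C}$ can only come from the Bernoulli part. Deleting one such edge then produces two infinite clusters with positive probability, by deletion tolerance of $\mathbb{P}_{x^2}$, which contradicts uniqueness.

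\textbf{Main obstacle.} The hard step is the limiting argument in Step 4. In infinite volume, a loop O(1) edge can be a bridge whose cycle escaped to infinity, which is exactly the two-ended situation. Excluding this requires a quantitative finite-volume statement, namely that the probability that a long ``ladder'' of loop edges crosses an annulus through a single edge vanishes. I would try to obtain it via amenability, comparing boundary and volume counts of such bridges.
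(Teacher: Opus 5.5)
There is a genuine gap, and it sits exactly at the step you flag as the main obstacle. Step 4 as written rests on two claims that fail.

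First, a two-ended cluster need not have any cut edge. All finite sets separating its two ends may have size at least two: a bi-infinite ladder is two-ended and 2-edge-connected. So there is no ``positive density of cut edges'' to exploit, and you must handle separating sets of arbitrary finite size.

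Second, an edge of a limit of finite even subgraphs can perfectly well separate two infinite halves. Every edge of $\mathbb{Z}$, the local limit of the even graphs $C_n$, is a bridge. In the two-ended scenario, $\eta^1$ may cross a separating set an odd number of times (odd flow). Then no deletion or local surgery that respects evenness can cut it. Deletion tolerance of $\delta$ only handles separating sets made entirely of Bernoulli-only edges.

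Your proposed amenability count of single-edge ``ladders'' is unsubstantiated, and it aims at the wrong target. Under the contradiction hypothesis, the infinite-volume configuration genuinely has odd flow almost surely (this is exactly what the paper's lower bound establishes), so no local counting can exclude it. Steps 2 and 3 (one-endedness of $\mathbb{G}$, cycle insertion) do not feed into the argument either.

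What is missing is a two-sided parity argument.

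(i) Suppose both $\eta^1$ and $\eta^2$ have even flow through an annulus at a scale that witnesses the two ends. Then, within each crossing cluster, the sources of $\eta^i_{\Lambda_n}$ can be paired by paths inside the annulus (the paper's disjoint-wiring lemma). By the Gibbs property, resampling inside $\Lambda_n$ then disconnects the two ends with positive conditional probability. This contradicts the fact that the unique infinite cluster joins them almost surely. This is the correct generalisation of your Bernoulli-deletion idea. Hence at least one of $\eta^1,\eta^2$ has odd flow. By symmetry $\eta^1$ has odd flow with probability at least $1/2$, and by ergodicity (odd flow at infinity is an invariant event) with probability at least $1-\varepsilon$ at suitable scales.

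(ii) In finite volume, the conditional law of $\eta^1$ given $\nn=\eta^1\cup\eta^2\cup\delta$ is $\operatorname{UEG}_{\nn}$. If some $\gamma^0$ has odd flow, then $\gamma\mapsto\gamma\Delta\gamma^0$ is a bijection between odd-flow and even-flow even subgraphs of $\nn$. So odd flow has probability at most $1/2$, and since the event is local, this bound passes to the infinite-volume limit, contradicting (i).

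The uniform-even-subgraph conditional structure and this XOR bijection are the key ingredients your proposal lacks.
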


For any finite subgraph $G \subset \mathbb{G}$, define the wired graph $G^1$ obtained from $\mathbb{G}/(G^c)$ by removing the internal edges of $G^c$ (so that $G^1$ is finite).
On a finite graph $G$, the (traced) single random current  can be defined through $\Prbcur_{G,x}^\emptyset = \ell_{G,x} \cup \mathbb{P}_{G,1 - \sqrt{1-x^2}}$; we usually drop the subscript $x$ in the following.
If $G_n \nearrow \mathbb{G}$, both  $\ell_{G_n,x}$ and $\ell_{G_n^1,x}$ have limits $\ell^0_{\mathbb{G}}$ and $\ell^+_{\mathbb{G}}$ \cite{aizenman2015random}. 
Similarly, $\Prbcur_{G_n}^\emptyset$ and $\Prbcur_{G^1_n}^\emptyset$ have limits  which satisfy that $\Prbcur_{\mathbb{G}}^\emptyset = \ell_{\mathbb{G}}^0 \cup \mathbb{P}_{\mathbb{G},1 - \sqrt{1-x^2}}$ and $\Prbcur_{\mathbb{G}}^+ = \ell_{\mathbb{G}}^+ \cup \mathbb{P}_{\mathbb{G},1 - \sqrt{1-x^2}}$, since the union $\cup$ is continuous. A priori, these limits are different, but in the setting of transitive, amenable graphs, Raoufi's uniqueness result $\phi_\mathbb{G}^1 = \phi_\mathbb{G}^0$  implies  $\Prbcur_\mathbb{G}^+ =\Prbcur_\mathbb{G}^\emptyset$ (see e.g., \cite[Appendix A]{hansen2026supercriticalloopo1random} for two proofs).

Define also the mixed double random current $\Prbcur_{\mathbb{G}}^{\emptyset,+} = \Prbcur_{\mathbb{G}}^\emptyset \cup \Prbcur_{\mathbb{G}}^+$, which equals $\Prbcur_{\mathbb{G}}^{\emptyset,\emptyset}$. In particular, \Cref{thm:double_current_has_one_end}  combined with the work of Raoufi \cite{raoufi2020translation}  answers \cite[Question 8]{DC16} in the affirmative. 
\begin{corollary}\label{cor:question}
    Let $\mathbb{G}$ be a transitive, one-ended and amenable graph. Whenever the double random current $\Prbcur^{\emptyset,+}_{\mathbb{G}}$  percolates, almost surely, it has a unique, one-ended infinite cluster.
\end{corollary}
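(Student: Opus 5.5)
The corollary is a formal consequence of \Cref{thm:double_current_has_one_end}. The plan is to identify the measure $\Prbcur^{\emptyset,+}_{\mathbb{G}}$ with $\Prbcur^{\emptyset,\emptyset}_{\mathbb{G},x}$ and then apply the theorem verbatim. The whole argument rests on the fact that the union operation $\cup$ acts on laws. Equal input laws therefore give equal output laws, and every statement in the corollary is a property of the law alone.

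The first step is to record Raoufi's result in the form we need. Since $\mathbb{G}$ is transitive and amenable, uniqueness of the random-cluster measure $\phi^1_{\mathbb{G}}=\phi^0_{\mathbb{G}}$ yields $\Prbcur^+_{\mathbb{G}}=\Prbcur^\emptyset_{\mathbb{G}}$, as recalled above (with the proofs in \cite[Appendix A]{hansen2026supercriticalloopo1random}). Consequently
\[
\Prbcur^{\emptyset,+}_{\mathbb{G}}=\Prbcur^\emptyset_{\mathbb{G}}\cup\Prbcur^+_{\mathbb{G}}=\Prbcur^\emptyset_{\mathbb{G}}\cup\Prbcur^\emptyset_{\mathbb{G}}.
\]

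The second step identifies the right-hand side with the double current. We expand $\Prbcur^\emptyset_{\mathbb{G}}=\ell^0_{\mathbb{G}}\cup\mathbb{P}_{\mathbb{G},1-\sqrt{1-x^2}}$ and use that $\cup$ is associative and commutative on laws. This gives
\[
\ell^0_{\mathbb{G}}\cup\ell^0_{\mathbb{G}}\cup\mathbb{P}_{\mathbb{G},p}\cup\mathbb{P}_{\mathbb{G},p},\qquad p=1-\sqrt{1-x^2}.
\]
The union of two independent Bernoulli percolations is Bernoulli with parameter $1-(1-p)^2$. Here $1-(1-p)^2=1-(1-x^2)=x^2$, so the two Bernoulli factors combine into $\mathbb{P}_{\mathbb{G},x^2}$.

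It remains to check that $\ell^0_{\mathbb{G}}\cup\ell^0_{\mathbb{G}}\cup\mathbb{P}_{\mathbb{G},x^2}$ is indeed $\Prbcur^{\emptyset,\emptyset}_{\mathbb{G},x}$. By definition $\Prbcur^{\emptyset,\emptyset}_{G_n,x}=\ell_{G_n,x}\cup\ell_{G_n,x}\cup\mathbb{P}_{G_n,x^2}$, and $\ell^0_{\mathbb{G}}$ is the local limit of $\ell_{G_n,x}$. The map $\cup$ is continuous in the local topology because cylinder events of the union depend on finitely many edges of each factor. Passing to the limit therefore gives the identification, and we obtain $\Prbcur^{\emptyset,+}_{\mathbb{G}}=\Prbcur^{\emptyset,\emptyset}_{\mathbb{G},x}$.

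With the laws identified, the event that the configuration percolates and the event that it has a unique, one-ended infinite cluster have the same probabilities under both measures. \Cref{thm:double_current_has_one_end} then gives the conclusion. The only genuinely nontrivial input is Raoufi's equality of the free and wired measures, which we take as given; all other steps are bookkeeping about laws. The one point needing care is the continuity of $\cup$ under local limits, which is a routine finite-dimensional check.
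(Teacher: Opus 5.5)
Your proposal is correct and matches the paper's argument. Raoufi's uniqueness $\phi^0_{\mathbb{G}}=\phi^1_{\mathbb{G}}$ gives $\Prbcur^+_{\mathbb{G}}=\Prbcur^\emptyset_{\mathbb{G}}$, hence $\Prbcur^{\emptyset,+}_{\mathbb{G}}=\Prbcur^{\emptyset,\emptyset}_{\mathbb{G}}$, and the theorem applies; your check that the two Bernoulli factors with parameter $1-\sqrt{1-x^2}$ merge into $\mathbb{P}_{\mathbb{G},x^2}$ just makes explicit a step the paper leaves implicit.
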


\subsection{Prior work. Coarse geometry of the double current and the Ising model}

In \cite{raoufi2020translation}, Raoufi explained, using the switching lemma for the double random current, how the coarse geometry of the mixed current $\Prbcur^{\emptyset,+}$ yielded conclusions for the random-cluster measure and Ising model.
That is also the reason that Duminil-Copin's question  was stated in terms of the double random current $\Prbcur_{\mathbb{G}}^{\emptyset,+}$.
Only the combination of Raoufi's result $\phi_\mathbb{G}^1 = \phi_\mathbb{G}^0$ with \Cref{thm:double_current_has_one_end} shows that all classically studied versions of the double random current are one-ended.

Raoufi gave a conceptual coarse geometric proof of the characterisation of translation invariant Gibbs states of the Ising model as convex combinations of the $+$ and $-$ states. In general, this result is obvious for $\beta<\beta_c,$ and the full result was already known in the nearest-neighbour setup on $\Z^d$:
For $d=2,$ it was first reached by Aizenman \cite{aizenman1980translation} and Higuchi \cite{higuchi1981absence} (in fact all Gibbs states and not just the translation invariant ones). For general $d\geq 3,$ this is due to Bodineau for $\beta > \beta_c$ \cite{bodineau2006translation}.  In the critical case $\beta = \beta_c,$ it follows from the work of Aizenman and Fernández \cite{aizenman1986critical} for $d \geq 4$ and Aizenman, Duminil-Copin and Sidoravicius for $d=3$ \cite{aizenman2015random}.

\subsection{Proof idea.} The idea of the proof is to argue via contradiction by focusing on the even subgraphs of $\mathbf{P}^{\emptyset,\emptyset}_{\mathbb{G}}$. If $\mathbf{P}^{\emptyset,\emptyset}_{\mathbb{G}}$ is two-ended, these come in two distinct classes, corresponding to whether the total parity of the number of edge-disjoint infinite paths is even or odd. We will refer to these classes as having even, respectively odd, flow.

The coupling $\ell_{G,x}\otimes \ell_{G,x}\otimes \mathbb{P}_{G,x^2}$ gives us two connections between $\mathbf{P}^{\emptyset,\emptyset}$ and $\ell$: First, that $\mathbf{P}^{\emptyset,\emptyset}_{G}=\ell_{G,x}\cup \ell_{G,x}\cup \mathbb{P}_{G,x^2}$ and second, that $\ell_{G,x}$ is the uniform even subgraph of $\mathbf{P}^{\emptyset,\emptyset}_{G}$. We are going to argue that the first identity together with uniqueness of the infinite component implies that, under $\ell_{G,x}\otimes \ell_{G,x}\otimes \mathbb{P}_{G,x^2},$ a sample $\eta^1$ (corresponding to the first $\ell_{G,x}$)  with high probability looks like it is an even subgraph of the double current $\nn$ with odd flow. However, by studying uniform even subgraphs of finite graphs, the second identity implies that, again under $\ell_{G,x}\otimes \ell_{G,x}\otimes \mathbb{P}_{G,x^2},$ $\eta^1$ looks like an even subgraph of $\nn$ with even flow with probability at least one half (corresponding to the case where the finite volume approximations of the two ends of $\nn$ crossing a box are connected outside of a large box). This yields the contradiction.

\section{Proof of the main result}

\subsection{Preliminaries}
For a graph $G$, finite or infinite, let $\Omega_\emptyset(G)$ denote the even subgraphs of $G$.

For finite graphs $G$, by \cite[Proposition 2.2]{hansen2025general}, we will consider the double random current triplet $(\eta^1,\eta^2,\delta)\sim \ell_{G,x}\otimes \ell_{G,x}\otimes \mathbb{P}_{G,x^2},$ which satisfies that 
$$
\nn:=\eta^1\cup\eta^2\cup\delta\sim \mathbf{P}^{\emptyset,\emptyset}_{G,x} \qquad \mathrm{and}\qquad  \ell_{G,x}\otimes \ell_{G,x}\otimes \mathbb{P}_{G,x^2}[\eta^1 \in \cdot\mid \nn]={\UEG}_{\nn}[\;\cdot\;],
$$
where $\UEG_{\nn}$ denotes the uniform measure\footnote{A weaker version of this result appeared previously in \cite{klausen2021monotonicity} (and \cite{lis2017planar} implicitly). Reasoning with this extended coupling was also pivotal in the
 proof of \cite[Proposition 2.12]{lopez2026ising}.} on $\Omega_{\emptyset}(\nn)$.
From now on, we drop the parameter $x$ from the notation.

We let $\Lambda_N$ denote the box of size $N$ for the graph distance and define the annulus $A_{n,N} = \Lambda_N \setminus \Lambda_n$.

Let $2\mathtt{Ends}_n$ denote the event that $\nn \setminus \Lambda_n$ has two infinite clusters, $2\mathtt{Cross}_{n,N}$ be the event that the annulus $A_{n,N}$ has exactly two crossing clusters, and let $2\mathtt{EndsCross}_{n,N}=2\mathtt{Ends}_n \cap 2\mathtt{Cross}_{n,N}.$ Define also $2\mathtt{EndsCross}_{r,n,N} = 2\mathtt{EndsCross}_{r,n} \cap 2\mathtt{EndsCross}_{n,N}$. 

For a graph $G=(V,E)$,  any $F \subseteq E$, and any configuration $\eta \in \{0,1\}^E$, we use the notation $\eta_F$ for the restriction of $\eta$ to $F$, defined by $\eta_F(e) = \eta(e)\id[e\in F]$, for every $e\in E$.

For  any infinite, two-ended, connected graph $\mathbb{W}$, and any finite graph $F$ such that $\mathbb{W} \setminus F$ has two infinite components $\mathcal{C}_1(F),\mathcal{C}_2(F)$, define $L(\eta)$ by
$$
L(\eta) = \abs{\partial(\eta_F) \cap \mathcal{C}_1(F)} =\abs{\partial(\eta_F) \cap \mathcal{C}_2(F)} \pmod{2}.
$$
One may check that $L(\eta)$ is independent of $F$. Note, furthermore, that $L(\eta)=1$ implies that $\eta$ percolates (i.e. has an infinite connected component).  Define the odd-flow even subgraphs of $\mathbb{W}$,
\begin{align*}
    \Omega_\emptyset^{\infty,\infty}(\mathbb{W}) =  \{ \eta \in \Omega_\emptyset(\mathbb{W}) \mid  L(\eta) = 1 \}.
\end{align*}
If $\mathbb{W}$ is two-ended, $\Omega_{\emptyset}^{\infty,\infty}(\mathbb{W})$ and the free even subgraphs $\Omega_{\emptyset}^{0}(\mathbb{W})=\{L(\eta)=0\}$ form a partition of $\Omega_{\emptyset}(\mathbb{W})$ and are the supports of the two extremal Gibbs measures for $\UEG_{\mathbb{W}}.$ This follows from the general characterisation of Gibbs states for the uniform even subgraph
\cite[Theorem 3.14, Proposition 3.16]{hansen2023uniform}.

Let us start with two general lemmas. The following lemma is stated for the free measure $\ell^0_{\mathbb{G}}$, but replacing \cite[Proposition A.2]{hansen2026supercriticalloopo1random} by  \cite[Proposition A.4]{hansen2026supercriticalloopo1random}, the same proof works for $\ell^+_{\mathbb{G}}$.

\begin{lemma}\label{lemma:ergodicity}
Let $\mathbb{G}$ be a vertex-transitive, amenable, infinite graph. Then, the double random current  triplet
$\ell^0_{\mathbb{G}}\otimes \ell^0_{\mathbb{G}}\otimes \mathbb{P}_{\mathbb{G}}$ is tail-trivial and ergodic.
Moreover, $\ell^0_{\mathbb{G}}\cup \ell^0_{\mathbb{G}}\cup \mathbb{P}_{\mathbb{G}}$ has at most one infinite cluster almost surely.
\end{lemma}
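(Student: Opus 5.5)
We derive tail-triviality of the triplet from tail-triviality of each factor, then deduce ergodicity, and finally obtain uniqueness of the infinite cluster from a Burton--Keane argument.

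\textbf{Tail-triviality of the factors.} We use \cite[Proposition A.2]{hansen2026supercriticalloopo1random}, which we take to give that $\ell^0_{\mathbb{G}}$ is tail-trivial; if it only gives extremality, tail-triviality follows from it. The Bernoulli measure $\mathbb{P}_{\mathbb{G},x^2}$ is tail-trivial by Kolmogorov's zero-one law.

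\textbf{Tail-triviality of the product.} A product of tail-trivial measures is again tail-trivial. To see this, let $\mu=\mu_1\otimes\mu_2$ with $\mu_i$ tail-trivial, and let $A$ be a tail event for the product configuration. For $\mu_2$-a.e.\ fixed second coordinate $\omega_2$, the section $A_{\omega_2}$ is a tail event in the first coordinate, so $f(\omega_2):=\mu_1(A_{\omega_2})\in\{0,1\}$. The function $f$ is measurable and is itself a tail function of $\omega_2$: changing $\omega_2$ on a finite set leaves $A$ unchanged, hence leaves $A_{\omega_2}$ unchanged. Thus $f$ is $\mu_2$-a.s.\ constant, and $\mu(A)=\int f\,d\mu_2\in\{0,1\}$.

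A cleaner route is to use the characterisation that tail-triviality is equivalent to short-range mixing: for every cylinder event $B$ and $\varepsilon>0$, there is a finite $\Lambda$ with $|\mu(B\cap C)-\mu(B)\mu(C)|\le\varepsilon$ for all events $C$ measurable outside $\Lambda$. This property is preserved under finite products, by approximating $C$ with finite unions of product rectangles. Either argument yields tail-triviality of $\ell^0_{\mathbb{G}}\otimes\ell^0_{\mathbb{G}}\otimes\mathbb{P}_{\mathbb{G}}$.

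\textbf{Ergodicity.} The product measure is invariant under the automorphism group, since each factor is. Tail-triviality then gives mixing, and hence ergodicity. Explicitly, for cylinder events $B,C$ and an automorphism $\gamma$ with $\gamma$ moving the support of $C$ far from $\Lambda$, the previous step gives $\mu(B\cap\gamma C)\to\mu(B)\mu(C)$. Since $\mathbb{G}$ is infinite and transitive, such $\gamma$ exist. Applying this with $B=C$ an invariant event, approximated by cylinders, gives $\mu(B)=\mu(B)^2$.

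\textbf{At most one infinite cluster.} The union map $(\eta^1,\eta^2,\delta)\mapsto\nn$ is equivariant, so the law of $\nn$ is invariant and ergodic. Hence the number of infinite clusters is almost surely a constant $k\in\{0,1,\dots,\infty\}$.

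\emph{Excluding $2\le k<\infty$.} Apply finite energy through $\delta$ alone: opening all $\delta$-edges in a large box connecting two infinite clusters has positive probability, because $\delta$ is independent Bernoulli. This merges the two clusters and contradicts the constancy of $k$.

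\emph{Excluding $k=\infty$.} Use the Burton--Keane trifurcation argument with amenability \cite{burton1991topological}. This requires creating trifurcations with positive probability, which again only needs opening edges, and this can be done through $\delta$. Since $\nn$ is increasing in $\delta$ and $\delta$ is independent with positive density, insertion tolerance holds.

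\textbf{Main obstacle.} The only delicate point is that $\nn$ is not deletion tolerant, but Burton--Keane only requires insertion tolerance. Alternatively, this last statement is the cited result of \cite{aizenman2015random}.
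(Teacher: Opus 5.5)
\textbf{The gap is in your first step.} Tail-triviality of $\ell^0_{\mathbb{G}}$ is the substantive part of the lemma, and you get it only by supposing that \cite[Proposition A.2]{hansen2026supercriticalloopo1random} states it, or states extremality. As the paper uses it, that proposition is neither. It is the infinite-volume loop--cluster coupling $\ell^0_{\mathbb{G}}=\phi^0_{\mathbb{G}}[\operatorname{UEG}_\omega]$, i.e.\ $\ell^0_{\mathbb{G}}$ is the uniform even subgraph of a free FK-Ising sample. There is also no soft substitute. The loop O($1$) model lacks the monotonicity that usually makes free-boundary limits extremal, and infinite-volume limits need not be tail-trivial in general: the periodic-boundary low-temperature Ising limit is the mixture of the $+$ and $-$ states.

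\textbf{The missing argument} transfers tail-triviality from $\phi^0_{\mathbb{G}}$ through the coupling, as the paper does.
\begin{enumerate}
\item For a tail event $A$, $\ell^0_{\mathbb{G}}[A]=\phi^0_{\mathbb{G}}[\operatorname{UEG}_\omega[A]]$.
\item Since $\phi^0_{\mathbb{G}}$-a.s.\ $\omega$ has at most one end, $\operatorname{UEG}_\omega$ is the unique Gibbs measure for the uniform even subgraph of $\omega$ \cite[Theorem 3.14]{hansen2023uniform}. It is therefore tail-trivial, and $\operatorname{UEG}_\omega[A]\in\{0,1\}$.
\item The extra point you must prove is that $\{\omega:\operatorname{UEG}_\omega[A]=1\}\cap\{\#\mathtt{Ends}(\omega)\le 1\}$ is itself a tail event in $\omega$. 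Take $\omega,\omega'$ differing only inside $\Lambda_n$, and choose $N$ so that $\omega$ has at most one crossing of $A_{n,N}$. The decomposition $\eta_n\Delta\eta_N\sim\operatorname{UEG}_\omega$ with $(\eta_n,\eta_N)\sim\operatorname{UEG}_{\omega_{\mathbb{G}\setminus\Lambda_n}}\otimes\operatorname{UEG}_{\omega_{\Lambda_N}}$ \cite[Corollary 3.7]{hansen2023uniform} can use the same $\eta_n$ for $\omega$ and $\omega'$. Hence $\operatorname{UEG}_\omega$ and $\operatorname{UEG}_{\omega'}$ can be coupled to agree off $\Lambda_N$, so they give $A$ the same mass.
\item Tail-triviality of $\phi^0_{\mathbb{G}}$ then yields $\ell^0_{\mathbb{G}}[A]\in\{0,1\}$.
\end{enumerate}

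\textbf{The rest of your proposal is sound} once this is supplied. Your sectioning argument for products replaces the paper's citation of Georgii. Your mixing step uses the characterisation of tail-triviality by decorrelation from events outside a finite set, so it does not need the Gibbs property of the triplet that the paper invokes. Your insertion-tolerance Burton--Keane argument, run through the independent Bernoulli factor $\delta$, is a legitimate alternative to the paper's citation of \cite[Theorem 2.5]{aizenman2015random}.
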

\begin{proof}
First, we argue that $\ell^0_{\mathbb{G}}$ is tail-trivial and ergodic. Let $A\subseteq \{0,1\}^{E(\mathbb{G})}$ be a tail event. Then, by the infinite volume loop-cluster coupling\footnote{Since $\omega$ is almost-surely one-ended there is only one uniform even subgraph \cite[Theorem. 3.14]{hansen2023uniform}, denoted $\operatorname{UEG}_{\omega}$.}  \cite[Proposition A.2]{hansen2026supercriticalloopo1random}, 
$$
\ell^0_{\mathbb{G}}[A]=\phi^0_{\mathbb{G}}[\operatorname{UEG}_{\omega}[A]]=\phi^0_{\mathbb{G}}[\operatorname{UEG}_{\omega}[A]\cap \{\#\mathtt{Ends}(\omega)\leq 1\}]=\phi^0_{\mathbb{G}}[\{\operatorname{UEG}_{\omega}[A]=1\}\cap \{\# \mathtt{Ends}(\omega)\leq 1\}],
$$
since, by amenability, $\phi^0_{\mathbb{G}}$ almost surely has at most one end and $\operatorname{UEG}_{\omega}$ is tail-trivial for every one-ended\footnote{It is the unique Gibbs measure of the uniform even subgraph  \cite[Theorem 3.14]{hansen2023uniform}, and hence, in particular, extremal, which implies tail-triviality \cite[Chapter 6]{friedli2018}.} $\omega$.

Let us argue that $\{\operatorname{UEG}_{\omega}[A]=1\}\cap \{\# \mathtt{Ends}(\omega)\leq 1\}$ is a tail event. Indeed, fix any two configurations $\omega,\omega'$ that disagree on only finitely many edges and have at most one end. Let $n<N$ be such that $\omega$ and $\omega'$ agree on $\mathbb{G}\setminus \Lambda_n$ and such that there is at most one crossing from $\Lambda_n$ to $\partial_v\Lambda_N$ in $\omega$ (cf. the proof of \Cref{lemma:two_crossings}).  Then, if $(\eta_n,\eta_N)\sim\operatorname{UEG}_{\omega_{\mathbb{G}\setminus \Lambda_n}}\otimes \operatorname{UEG}_{\omega_{\Lambda_{N}}},$ $\eta_n\Delta \eta_N\sim \operatorname{UEG}_{\omega}$ by \cite[Corollary 3.7]{hansen2023uniform}. In particular, $\operatorname{UEG}_{\omega}$ and $\operatorname{UEG}_{\omega'}$ can be coupled to agree outside of $\Lambda_N$. Since $A$ is a tail event, $\operatorname{UEG}_{\omega}[A]=\operatorname{UEG}_{\omega'}[A].$ Thus, $\{\operatorname{UEG}_{\omega}[A]=1\}\cap \{\# \mathtt{Ends}(\omega)\leq 1\}$ is a tail event. 

By the tail-triviality of $\phi^0_{\mathbb{G}}$ (see e.g., \cite[p.1113]{lyons2000phase}), we deduce tail-triviality of $\ell^0_{\mathbb{G}}.$ In particular, $\ell^0_{\mathbb{G}}$ is a tail-trivial, automorphism-invariant Gibbs measure for the loop O($1$) model on $\mathbb{G}$. Since products of tail-trivial measures are tail-trivial\footnote{see e.g., \cite[Example 7.18]{GeorgiiBook}}, we get the same conclusion for $\ell^0_{\mathbb{G}}\otimes \ell^0_{\mathbb{G}}\otimes \mathbb{P}_{\mathbb{G}}.$ As this measure has a natural Gibbs property\footnote{For any $G\subseteq \mathbb{G},$ the configuration in $G$ is conditionally independent of the one in $\mathbb{G}\setminus G$ given the parities on the vertex boundary $\partial_v G$ of the restrictions of the loop O($1$) factors to $G$ (or, equivalently, their restrictions to $\mathbb{G}\setminus G)$.}, and tail-trivial Gibbs measures are mixing by the backwards martingale convergence theorem\footnote{See, for instance, the proof of \cite[Theorem 6.58]{friedli2018}.}, we deduce that  $\ell^0_{\mathbb{G}}\otimes \ell^0_{\mathbb{G}}\otimes \mathbb{P}_{\mathbb{G}}$ is automorphism-invariant and mixing and hence ergodic.

Uniqueness of the infinite cluster (and ergodicity) for the double random current $\ell^0_{\mathbb{G}}\cup \ell^0_{\mathbb{G}}\cup \mathbb{P}_{\mathbb{G}}$ was proven in \cite[Theorem 2.5]{aizenman2015random}.
\end{proof}

Tail-triviality and ergodicity both mean that the number of ends is almost surely constant. The next general lemma moves two-endedness to finite volume.
\begin{lemma}\label{lemma:two_crossings}
    Let $\mathbb{G}$ be an infinite graph. Suppose that $\nu$ is a percolation measure under which there is a unique infinite cluster with $2$ ends almost surely. Then, for any $\varepsilon>0$, there exist $n,N$ large enough, such that $\nu[2\mathtt{Cross}_{n,N}]\geq 1-\varepsilon.$

\end{lemma}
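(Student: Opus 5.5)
We prove the two inequalities $\nu[\#\text{crossing clusters of }A_{n,N}\geq 2]\geq 1-\varepsilon/2$ and $\nu[\#\text{crossing clusters of }A_{n,N}\leq 2]\geq 1-\varepsilon/2$ separately, choosing first $n$ and then $N$. Throughout, let $\mathcal{C}$ denote the unique infinite cluster, which by assumption has exactly two ends almost surely. A crossing cluster of $A_{n,N}$ means a connected component of $\omega\cap A_{n,N}$ meeting both $\partial\Lambda_n$ and $\partial\Lambda_N$.

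\emph{Lower bound.} Since $\mathcal{C}$ is two-ended, for every $\delta>0$ there is $n$ such that, with probability at least $1-\delta$, the cluster $\mathcal{C}$ meets $\Lambda_n$ and $\mathcal{C}\setminus\Lambda_n$ has at least two infinite components. To see this, note that the events "$\mathcal{C}\cap\Lambda_n\neq\emptyset$ and $\mathcal{C}\setminus \Lambda_n$ has $\geq 2$ infinite components" increase in $n$ once $\Lambda_n$ contains a finite set separating the two ends of $\mathcal{C}$. Their union has full measure, since such a separating finite set exists almost surely. Two distinct infinite components of $\mathcal{C}\setminus\Lambda_n$ each produce a path from $\partial\Lambda_n$ to $\partial\Lambda_N$ inside $A_{n,N}$, for every $N$. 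If these two paths lay in the same component of $\omega\cap A_{n,N}$ for all $N$, then by compactness (a finite configuration in $\Lambda_N$ stabilises) the two infinite components would be joined outside $\Lambda_n$. A cleaner way to organise this is as follows. Fix $n$, and for each $N$ let $B_N$ be the event that the two infinite components of $\mathcal{C}\setminus\Lambda_n$ are connected within $A_{n,N}$. Since the components are distinct, $\bigcup_N B_N$ has probability zero, so $\nu[B_N]\to 0$ along the increasing union. Hence for $N$ large, with probability at least $1-2\delta$, there are at least two distinct crossing clusters.

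\emph{Upper bound.} This is the main obstacle: finite crossing clusters, or spurious branches of $\mathcal{C}$ that reach $\partial\Lambda_N$ but die later, could create additional crossings. The plan is to use that $\Lambda_n$ is finite and $\omega$ almost surely has a unique infinite cluster. Any cluster of $\omega$ intersecting $\Lambda_n$ other than $\mathcal{C}$ is finite. Since $\Lambda_n$ is finite there are only finitely many such clusters, so with probability at least $1-\delta$ all of them lie inside $\Lambda_{M}$ for some $M$ chosen after $n$. Likewise, each finite component of $\mathcal{C}\setminus\Lambda_n$ adjacent to $\Lambda_n$ (there are finitely many) lies in $\Lambda_M$ with probability at least $1-\delta$. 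Now take $N> M$. A crossing cluster of $A_{n,N}$ touches $\partial\Lambda_n$, so it is part of a cluster hitting $\Lambda_n$ and contained in a component of $\omega\setminus\Lambda_n$ reaching distance $N>M$. On the good event, such a component is one of the infinite components of $\mathcal{C}\setminus\Lambda_n$. Moreover, each such infinite component gives exactly one crossing cluster once $N$ is large: the finitely many pieces of its intersection with $A_{n,N}$ that touch $\partial\Lambda_n$ must be joined within $A_{n,N}$, which holds for large $N$ by the same increasing-union argument as before.

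Finally, we have to ensure that the number of infinite components of $\mathcal{C}\setminus\Lambda_n$ is exactly $2$ rather than just at least $2$. A two-ended graph can have more than two infinite components after removing a finite set only if some of them fail to be separated as ends, which is impossible: the number of infinite components of the complement of a finite set is at most the number of ends, here $2$. Combining the estimates with $\delta=\varepsilon/10$ gives $\nu[2\mathtt{Cross}_{n,N}]\geq 1-\varepsilon$.
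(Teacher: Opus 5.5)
Your argument is correct, but it is organised differently from the paper's. The paper fixes $n$ and uses that the number $\mathtt{CC}(n,N)$ of crossing clusters of $A_{n,N}$ is non-increasing in $N$: every crossing cluster of $A_{n,N+1}$ contains a crossing cluster of $A_{n,N}$, and distinct ones contain distinct ones. So $\mathtt{CC}(n,\cdot)$ stabilises. The paper then identifies the two-ended event, up to a null set, with $\bigcup_{n,N}\bigcap_{k\geq N}\{\mathtt{CC}(n,k)=2\}$ and concludes by continuity of measure, getting the bound for all $k\geq N$ at once. That identification silently uses that the stabilised value of $\mathtt{CC}(n,\cdot)$ is the number of infinite components of $\omega\setminus\Lambda_n$. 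This requires that finite components adjacent to $\Lambda_n$ eventually stop crossing, and that the finitely many pieces of each infinite component touching the inner boundary are eventually joined. Your upper bound proves exactly this by hand, via the good events at scale $M$ and then $N$. Your route is longer but self-contained, whereas the monotonicity observation shortens the paper's proof to a few lines. Since your good events are increasing in $N$ once $M$ is fixed, you also get the uniform-in-$N$ version.

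Two small points. In the lower bound, the events $B_N$ are empty, not merely null. Since $\omega_{A_{n,N}}\subseteq\omega_{\Lambda_n^c}$, distinct components of $\mathcal{C}\setminus\Lambda_n$ can never be joined inside $A_{n,N}$, so you get at least two crossing clusters for every $N$ without any compactness argument. In the upper bound, the step ``touches $\partial\Lambda_n$, so it is part of a cluster hitting $\Lambda_n$'' depends on the boundary convention. If the inner boundary of $A_{n,N}$ lies outside $\Lambda_n$, a crossing cluster may belong to an $\omega$-cluster that avoids $\Lambda_n$. To fix this, apply your radius bound to all (finitely many) components of $\omega_{\Lambda_n^c}$ that meet the inner boundary, rather than only to clusters hitting $\Lambda_n$.
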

\begin{proof}
Fix $n$ and let $\mathtt{CC}(n,N)$ be the random variable that counts the number of crossing clusters in the annulus $A_{n,N}$. Note that $\mathtt{CC}(n,N)$ is decreasing in $N$. Now, up to the $\nu$-null set that there are two one-ended infinite components, the event that there are two ends is the event
$
\bigcup_{n,N}  \bigcap_{k\geq N} \{\mathtt{CC}(n,k)=2\}.
$
By continuity of measure,
$$\lim_{n\to \infty} \lim_{N\to\infty} \liminf_{k\geq N} \nu[\{\mathtt{CC}(n,k) =2 \}]
= \lim_{n\to\infty} \nu[2\mathtt{Ends}_n] = 1.$$

Accordingly, given $\varepsilon>0,$ there exist $n,N$ such that $\nu[\{\mathtt{CC}(n,k) =2 \}]\geq 1-\varepsilon$ for all $k\geq N.$ In particular, this proves the desired.
\end{proof}

\subsection{The main proof}

In the following argument, we will assume two-endedness, so a typical configuration will have exactly two crossings of a large annulus. We warm up with the following observation: For $\omega\in 2\mathtt{Cross}_{n,N},$ we denote by $\mathcal{C}_1(\omega)$ and $\mathcal{C}_2(\omega)$ an arbitrary labelling of the two clusters crossing $\Lambda_N\setminus \Lambda_n.$ If
$\eta \in \Omega_{\emptyset}(\omega)$, then
\begin{align} \label{eq:parity}
    \abs{\partial (\eta_{\Lambda_n})\cap \mathcal{C}_1}=    \abs{\partial (\eta_{\Lambda_n})\cap \mathcal{C}_2}
    =    \abs{\partial (\eta_{\Lambda_N})\cap \mathcal{C}_1}
    =    \abs{\partial (\eta_{\Lambda_N})\cap \mathcal{C}_2} \pmod 2.
\end{align}
If one (and hence all four) of these numbers is even/odd, we say that $\eta$ has even/odd flow (\textit{in} the annulus). 
\begin{definition}
    For $\omega\in 2\mathtt{Cross}_{n,N},$  let $\mathcal{C}_1$ be one of its two crossing clusters. Define 
    $$\mathtt{EvenFlow}_{n,N}(\omega) = \{ \eta \in \Omega_\emptyset(\omega) \mid \abs{\partial (\eta_{\Lambda_n})\cap \mathcal{C}_1} \equiv 0 \pmod{2}  \}$$
     $$\mathtt{OddFlow}_{n,N}(\omega) = \{ \eta \in \Omega_\emptyset(\omega) \mid \abs{\partial (\eta_{\Lambda_n})\cap \mathcal{C}_1} \equiv 1 \pmod{2}  \}.$$
\end{definition}
The following lemma shows that the probability of odd flow of a uniform even subgraph is at most 1/2 using a XOR trick.
\begin{lemma} \label{lemma:source_counting}
    Suppose that $\omega \in 2\mathtt{Cross}_{n,N}$ is a finite subgraph of $\mathbb{G}$.
    Then
    $$
    \operatorname{UEG}_{\omega}[\mathtt{OddFlow}_{n,N}(\omega)] \leq \frac{1}{2}.
    $$
 \end{lemma}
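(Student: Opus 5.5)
The plan is to exploit the group structure of $\Omega_\emptyset(\omega)$ under symmetric difference $\Delta$. The flow parity
$$\eta\mapsto \abs{\partial(\eta_{\Lambda_n})\cap \mathcal{C}_1}\pmod 2$$
is additive under $\Delta$. Indeed, $\eta_{\Lambda_n}$ depends linearly (over $\mathbb{F}_2$) on $\eta$, and $\partial$ is linear over $\mathbb{F}_2$. Here $\partial(\eta_{\Lambda_n})$ is the set of odd-degree vertices of the restriction, so its indicator is the $\mathbb{F}_2$-sum of the edge boundary maps, and intersecting with $\mathcal{C}_1$ and counting mod $2$ is a linear functional. So the parity is a group homomorphism
$$\varphi:\Omega_\emptyset(\omega)\to \mathbb{Z}/2.$$
Then $\mathtt{EvenFlow}_{n,N}(\omega)=\ker\varphi$ is a subgroup, and $\mathtt{OddFlow}_{n,N}(\omega)$ is either empty or a coset of $\ker\varphi$. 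Since $\omega$ is finite, $\Omega_\emptyset(\omega)$ is a finite group and $\operatorname{UEG}_\omega$ is its uniform (Haar) measure.

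The key steps in order are as follows.

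First, verify the linearity of $\varphi$, and check that \eqref{eq:parity} makes the definition independent of the choice between $\mathcal{C}_1$ and $\mathcal{C}_2$. The latter is not really needed; we just fix $\mathcal{C}_1$.

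Second, handle the two cases. If $\mathtt{OddFlow}_{n,N}(\omega)=\emptyset$, the probability is $0\le 1/2$. Otherwise pick some $\eta_0$ with odd flow. Then $\eta\mapsto \eta\Delta\eta_0$ is a bijection of $\Omega_\emptyset(\omega)$ swapping $\mathtt{EvenFlow}_{n,N}(\omega)$ and $\mathtt{OddFlow}_{n,N}(\omega)$. Hence the two sets have equal cardinality, and the odd-flow probability under the uniform measure is exactly $1/2$. This is the XOR trick.

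In either case the bound $\le 1/2$ follows, with equality exactly when odd flow is possible. The only mildly delicate point is the linearity of $\varphi$, in particular that $\partial$ of a restriction is additive under $\Delta$. This holds because the degree parity at each vertex is additive. I do not expect a real obstacle.
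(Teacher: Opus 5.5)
Your proposal is correct and is essentially the paper's own argument: the paper also fixes an odd-flow element $\gamma^0$ and uses the XOR bijection $\gamma\mapsto\gamma\Delta\gamma^0$ between $\mathtt{OddFlow}_{n,N}(\omega)$ and $\mathtt{EvenFlow}_{n,N}(\omega)$. You additionally spell out two things the paper leaves implicit: the $\mathbb{F}_2$-additivity of the flow parity under $\Delta$, and the trivial case where no odd-flow subgraph exists.
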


\begin{proof}
 Let $\mathcal{C}_1(\omega)$, $\mathcal{C}_2(\omega)$ be the two clusters crossing $A_{n,N}$. 
Suppose there exists $\gamma^0\in \mathtt{OddFlow}_{n,N}(\omega)$.  Then, the map $\gamma\mapsto \gamma\Delta \gamma^0$ is a bijection between odd and even flow
$$
\cdot \;\Delta \gamma^0: \mathtt{OddFlow}_{n,N}(\omega)\to \mathtt{EvenFlow}_{n,N}(\omega). 
$$
This proves $\operatorname{UEG}_{\omega}[ \mathtt{OddFlow}_{n,N}(\omega)] \leq \frac{1}{2}$, which is the desired. 
\end{proof}

Define the event that both even subgraphs have even flow in the triplet,  
$$\mathtt{DoubleEvenFlow}_{n,N} = \{(\eta^1, \eta^2, \delta)  \mid  \eta^1 \in  \mathtt{EvenFlow}_{n,N}(\eta^1 \cup \eta^2\cup \delta),  \eta^2 \in  \mathtt{EvenFlow}_{n,N}(\eta^1 \cup \eta^2\cup \delta)\}.$$

The next technical lemma will allow us to match sources disjointly whenever the parity above is even, see \Cref{fig:DW_event} for a graphical illustration.  This will allow us to show that it is impossible to have double even flow at a scale that witnesses the two ends. 

\begin{lemma}[Disjoint wirings of even crossings]\label{lemma:disjoint_wiring}
Let $r < n$, $\omega \in 2\mathtt{Cross}_{r,n}$ and  $\eta \in \mathtt{EvenFlow}_{r,n}(\omega)$.
Then there exists a $\gamma \subseteq \omega_{A_{r,n}}$ with $\partial \gamma = \partial(\eta_{\Lambda_n})$.
In particular,
$
\partial(\eta_{\Lambda_n}) \cap \mathcal{C}_1(\omega) \overset{\gamma}{\not \cc} \partial(\eta_{\Lambda_n}) \cap \mathcal{C}_2(\omega).
$
\end{lemma}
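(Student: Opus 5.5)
The plan is to reduce the statement to the standard combinatorial fact about boundaries of subgraphs. For a finite graph $H$ and a vertex set $S$, there is a subgraph $\gamma\subseteq H$ with $\partial\gamma=S$ if and only if every connected component of $H$ contains an even number of vertices of $S$. Necessity is the handshake lemma applied to $\gamma$ restricted to each component. For sufficiency, pair up the points of $S$ inside each component, choose a path in $H$ for each pair, and take the symmetric difference of these paths. I will apply this with $H=\omega_{A_{r,n}}$ and $S=\partial(\eta_{\Lambda_n})$. All sources of $\eta_{\Lambda_n}$ are vertices of $\Lambda_n$ incident to edges of $\eta$ leaving $\Lambda_n$. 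They therefore sit at the outer boundary of the annulus, in $\omega_{A_{r,n}}$, and it suffices to check the parity condition component by component.

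The components of $\omega_{A_{r,n}}$ fall into three types. The first type is a component $K$ that does not reach the outer boundary of $A_{r,n}$. Such a $K$ contains no point of $S$, so there is nothing to check. The second type is a component $K$ that reaches the outer boundary but not $\Lambda_r$. Then the cluster of $K$ in $\omega_{\Lambda_n}$ is $K$ itself, because every edge of $\Lambda_n$ touching $K$ lies in the annulus. Applying the handshake lemma to $\eta_{\Lambda_n}$ restricted to the edges of this cluster gives that $|S\cap K|$ is even. The third type consists of the two crossing clusters $\mathcal{C}_1(\omega)$ and $\mathcal{C}_2(\omega)$. Here the handshake argument fails by itself, because $\mathcal{C}_1$ and $\mathcal{C}_2$ may be joined inside $\Lambda_r$; it then only shows that $|S\cap\mathcal{C}_1|+|S\cap\mathcal{C}_2|$ is even. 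This is exactly where the hypothesis $\eta\in\mathtt{EvenFlow}_{r,n}(\omega)$ enters. By definition $|\partial(\eta_{\Lambda_r})\cap\mathcal{C}_1|$ is even, and by \eqref{eq:parity} this parity equals that of $|\partial(\eta_{\Lambda_n})\cap\mathcal{C}_1|$ and of $|\partial(\eta_{\Lambda_n})\cap\mathcal{C}_2|$. So both crossing clusters carry an even number of sources.

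With the parity condition verified on every component, I build $\gamma$ as a disjoint union $\bigcup_K\gamma_K$. Each $\gamma_K\subseteq K$ is obtained by pairing the sources in $K$ and XOR-ing connecting paths inside $K$. This gives $\gamma\subseteq\omega_{A_{r,n}}$ with $\partial\gamma=\partial(\eta_{\Lambda_n})$. The final claim follows because $\gamma\cap\mathcal{C}_1$ and $\gamma\cap\mathcal{C}_2$ lie in distinct components of $\omega_{A_{r,n}}$, so no $\gamma$-path joins a source in $\mathcal{C}_1$ to a source in $\mathcal{C}_2$.

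The main obstacle I expect is bookkeeping at the boundaries. I need to make precise which vertices count as belonging to the annulus, so that all sources of $\eta_{\Lambda_n}$ lie in $A_{r,n}$, and so that the cluster of a second-type component $K$ in $\omega_{\Lambda_n}$ really uses only annulus edges. I also need to justify \eqref{eq:parity}, which transfers the even-flow parity from $\partial\Lambda_r$ to $\partial\Lambda_n$. I would prove it by applying the handshake lemma to $\eta_{A_{r,n}}\cap\mathcal{C}_i$, whose sources are exactly the sources at the two ends of the annulus inside $\mathcal{C}_i$.
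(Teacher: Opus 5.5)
Your proposal is correct and takes the same route as the paper's proof. You show that every component of $\omega_{A_{r,n}}$ carries an even number of points of $\partial(\eta_{\Lambda_n})$: the non-crossing clusters by a handshake argument, the two crossing clusters by the even-flow hypothesis together with \eqref{eq:parity}. You then pair the sources within each component, join each pair by a path in $\omega_{A_{r,n}}$, and take the symmetric difference. Your three-type case split and the explicit handshake argument for the non-crossing clusters fill in details that the paper states without justification.
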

\begin{proof}
If a cluster $C$ of $\omega_{A_{r,n}}$ does not cross the annulus $A_{r,n}$ then
\begin{align}\label{eq:touches_even_number_of_times}
    \abs{\partial(\eta_{\Lambda_n}) \cap C} \in 2\N.
\end{align}
By the even flow assumption,  \eqref{eq:touches_even_number_of_times} also holds each of the two crossing clusters.
The vertices in each of these even sets can be paired, and for each pair $\mathfrak{p} = (vw)$, there is a  path $\gamma_\mathfrak{p}\subset \omega_{A_{r,n}}$ with endpoints $v$ and $w$, and hence, $\partial(\gamma_{\mathfrak{p}})=\{v,w\}$. Setting $\gamma = \triangle_{\mathfrak{p}} \gamma_\mathfrak{p}$ yields the desired subgraph.
\end{proof}

For a connected graph $G$, and a set of vertices $A$ of even cardinality, define also $\ell_{G}^A$ to be Bernoulli percolation with parameter $\frac{x}{1+x}$ conditioned on the event that the vertices with odd degree are exactly $A$.
The next lemma shows that as long as the flow is even, there is a positive probability of wiring up the sources disjointly (see \Cref{fig:DW_event}). 
\begin{lemma}\label{lemma:disjoint_wiring_new}
 Suppose $(\eta^1,\eta^2, \delta) \in \mathtt{DoubleEvenFlow}_{n,N}$, and $\omega = \eta^1 \cup \eta^2 \cup \delta \in 2\mathtt{EndsCross}_{r,n,N}$ . Denote the two infinite components of $\omega_{\Lambda_n^c}$ by $\mathfrak{e}$ and $\mathfrak{f}$. Then, for any $r<n<N,$
 \begin{align*}
     \ell^{\partial(\eta^1_{\Lambda_n^c})}_{\Lambda_n} \otimes \ell^{\partial(\eta^2_{\Lambda_n^c})}_{\Lambda_n}  \otimes \mathbb{P}_{\Lambda_n}[\mathfrak{e} \overset{\nn_{\Lambda_n}\hspace{-1pt}\cup \hspace{1pt}\omega_{\Lambda_n^c}}{ \cc} \mathfrak{f}] < 1.
 \end{align*}
\end{lemma}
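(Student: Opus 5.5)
The plan is to exhibit one explicit configuration inside $\Lambda_n$ that has positive probability under the finite-volume product measure and under which $\mathfrak{e}$ and $\mathfrak{f}$ are not connected. The measure $\ell^{A_1}_{\Lambda_n}\otimes\ell^{A_2}_{\Lambda_n}\otimes\mathbb{P}_{\Lambda_n}$, with $A_i=\partial(\eta^i_{\Lambda_n^c})$, lives on a finite state space. Each of its factors gives strictly positive weight to every admissible configuration, that is, to every edge set whose odd-degree vertices are exactly $A_i$. So it suffices to find $\xi^1,\xi^2\subseteq E(\Lambda_n)$ with $\partial\xi^i=A_i$ such that $\mathfrak{e}$ and $\mathfrak{f}$ are not connected in $\xi^1\cup\xi^2\cup\omega_{\Lambda_n^c}$. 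We take the Bernoulli component $\delta=\emptyset$. Since $\eta^i$ is even, $\partial(\eta^i_{\Lambda_n^c})=\partial(\eta^i_{\Lambda_n})$, so the $\xi^i$ must match the sources of $\eta^i_{\Lambda_n}$.

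First we transfer the even-flow hypothesis from the annulus $A_{n,N}$ to $A_{r,n}$. Because $\omega\in 2\mathtt{EndsCross}_{r,n,N}$, the two crossing clusters of $A_{r,n}$ and of $A_{n,N}$ are each contained in the two infinite clusters of $\omega\setminus\Lambda_r$, and these correspond to the two ends. Hence a parity computation like \eqref{eq:parity}, with the fixed-$F$ parity $L$ being scale independent, shows that $\eta^i\in\mathtt{EvenFlow}_{n,N}(\omega)$ implies $\eta^i\in\mathtt{EvenFlow}_{r,n}(\omega)$. This bookkeeping of the labelling of $\mathcal{C}_1,\mathcal{C}_2$ across the two scales is the step needing the most care. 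Having done it, we apply \Cref{lemma:disjoint_wiring} to each $\eta^i$. This gives $\gamma^i\subseteq\omega_{A_{r,n}}$ with $\partial\gamma^i=\partial(\eta^i_{\Lambda_n})=A_i$, and we set $\xi^i=\gamma^i$.

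It remains to check disconnection. Every edge of $\gamma^1\cup\gamma^2\cup\omega_{\Lambda_n^c}$ lies in $\omega$ and outside $\Lambda_r$ (edges of $A_{r,n}$ or of $\Lambda_n^c$). Hence any path from $\mathfrak{e}$ to $\mathfrak{f}$ in this graph would be a path in $\omega\setminus\Lambda_r$. Since $\omega\in 2\mathtt{Ends}_r$, $\omega\setminus\Lambda_r$ has exactly two infinite clusters. The infinite clusters $\mathfrak{e}$ and $\mathfrak{f}$ of $\omega_{\Lambda_n^c}$ lie in these, and they must lie in different ones. Otherwise $\omega\setminus\Lambda_n$ would have at most one end beyond the single cluster of $\omega\setminus\Lambda_r$, contradicting that both scales see two distinct crossing clusters, which are exactly the two ends. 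So no such path exists, and the configuration $(\gamma^1,\gamma^2,\emptyset)$ witnesses that the probability of $\mathfrak{e}\cc\mathfrak{f}$ is strictly less than $1$.
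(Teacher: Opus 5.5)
Your proposal is correct and follows the paper's proof essentially step for step. You transfer even flow from $A_{n,N}$ to $A_{r,n}$ by parity, apply \Cref{lemma:disjoint_wiring} twice to get $\gamma^1,\gamma^2\subseteq\omega_{A_{r,n}}$, and note that $(\gamma^1,\gamma^2,\emptyset)$ has positive probability and keeps $\mathfrak{e},\mathfrak{f}$ apart because it lies in $\omega_{\Lambda_r^c}$ and $\omega\in 2\mathtt{Ends}_r$. Your argument for why $\mathfrak{e}$ and $\mathfrak{f}$ sit in different infinite clusters of $\omega\setminus\Lambda_r$ is loosely worded. The clean reason is that each infinite cluster of $\omega\setminus\Lambda_r$ contains an infinite cluster of $\omega\setminus\Lambda_n$ (by local finiteness), so the map between these two two-element sets is onto and therefore injective.
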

\begin{proof}
A parity consideration using \eqref{eq:parity} shows that $(\eta^1,\eta^2, \delta) \in \mathtt{DoubleEvenFlow}_{r,n}$, so two applications of \Cref{lemma:disjoint_wiring} give $\gamma_1$ and $\gamma_2$ with $\gamma_i \subseteq \omega_{A_{r,n}}$ such that $\partial \gamma_i =\partial (\eta_{\Lambda_n}^i)$.
Since $\omega \in  2\mathtt{Ends}_{r}$, the ends $\mathfrak{e},\mathfrak{f}$ are separated in $\omega_{\Lambda_r^c}$ and therefore also in $\gamma_1\cup\gamma_2 \cup \omega_{\Lambda_n^c}$.
The configuration $(\gamma_1, \gamma_2,0)$ has positive probability.

\end{proof}

 The idea in the next important lemma is that disjoint wirings contradict uniqueness of the infinite cluster. From now on we ease notation by writing $\ell_\mathbb{G}$ for $\ell^0_\mathbb{G}$.

\begin{lemma}\label{lemma:even_flow_is_impossible} Suppose the double random current is almost surely two-ended.
For any positive integers $r <n<N$,  $$ \ell_{\mathbb{G}}  \otimes \ell_{\mathbb{G}} \otimes \mathbb{P}_{\mathbb{G}}[\mathtt{DoubleEvenFlow}_{n,N} , 2\mathtt{EndsCross}_{r,n,N}] = 0.$$
\end{lemma}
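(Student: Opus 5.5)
We argue by contradiction: suppose the event $E:=\mathtt{DoubleEvenFlow}_{n,N}\cap 2\mathtt{EndsCross}_{r,n,N}$ has positive probability under $\ell_{\mathbb{G}}\otimes\ell_{\mathbb{G}}\otimes\mathbb{P}_{\mathbb{G}}$. We will resample the triplet inside $\Lambda_n$ and produce, with positive probability, a configuration whose union has two distinct infinite clusters. This contradicts the uniqueness statement in \Cref{lemma:ergodicity}.

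\textbf{Resampling.} We use the Gibbs property of the triplet recorded in the footnote of the proof of \Cref{lemma:ergodicity}. Conditionally on the configuration outside $\Lambda_n$, the restriction to $\Lambda_n$ is distributed as $\ell^{\partial(\eta^1_{\Lambda_n^c})}_{\Lambda_n}\otimes\ell^{\partial(\eta^2_{\Lambda_n^c})}_{\Lambda_n}\otimes\mathbb{P}_{\Lambda_n}$. Equivalently, the inside configuration is determined by the boundary parities $\partial(\eta^i_{\Lambda_n})=\partial(\eta^i_{\Lambda_n^c})$ on $\partial_v\Lambda_n$.

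The event $E$ is not measurable with respect to the outside, so we condition instead on an outside-measurable event of positive probability. For the triple event $2\mathtt{EndsCross}_{r,n,N}$, the part at scale $(n,N)$ concerns $\omega_{\Lambda_n^c}$ (two infinite components $\mathfrak e,\mathfrak f$ in $\Lambda_n^c$, two crossings of $A_{n,N}$). The part at scale $r$ involves the inside.

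By \eqref{eq:parity} and the proof of \Cref{lemma:disjoint_wiring_new}, on $E$ the even-flow information is encoded by the parities of $\partial(\eta^i_{\Lambda_n})$ on the traces of $\mathfrak e,\mathfrak f$ on $\partial_v\Lambda_n$. These parities are measurable from outside $\Lambda_n$. Hence there is an outside event $O$ with the following properties. First, $O$ has positive probability. Second, on $O$ the outside configuration admits a compatible inside configuration in $E$. Third, on $O$ the two ends $\mathfrak e,\mathfrak f$ of $\omega_{\Lambda_n^c}$ are infinite and have even-flow boundary data for both $\eta^1$ and $\eta^2$.

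\textbf{Applying the disjoint wiring.} On $O$, \Cref{lemma:disjoint_wiring_new} gives positive conditional probability that $\mathfrak e$ and $\mathfrak f$ are not connected in $\nn_{\Lambda_n}\cup\omega_{\Lambda_n^c}$. The witness is the wiring $(\gamma_1,\gamma_2,0)$ of \Cref{lemma:disjoint_wiring}, which has positive weight since $\Lambda_n$ is finite.

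Integrating over $O$, we get with positive probability a full configuration in which $\mathfrak e$ and $\mathfrak f$ are disjoint infinite clusters of $\nn$. This requires checking that the resampled inside does not merge them through $\Lambda_n$, which is exactly the content of the lemma. Since the triplet measure is the infinite-volume measure, this gives two infinite clusters with positive probability, contradicting \Cref{lemma:ergodicity}.

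\textbf{Main obstacle.} The delicate step is the measurability and positivity bookkeeping that reduces $E$ to the outside-measurable event $O$. One must justify two things:
\begin{enumerate}
\item The even-flow property of $\eta^i$ in $A_{n,N}$ depends only on outside data. It does: the quantity $|\partial(\eta^i_{\Lambda_n})\cap\mathcal C_1|$ is read off on $\partial_v\Lambda_n$ together with the outside crossing cluster.
\item The disjoint wiring of \Cref{lemma:disjoint_wiring_new} uses only outside data. The lemma as stated refers to $2\mathtt{Cross}_{r,n}$, which involves the inside. So we condition additionally on the original inside configuration lying in $E$, and then apply \Cref{lemma:disjoint_wiring} to that original $\omega$ to build $\gamma_i\subseteq\omega_{A_{r,n}}$.
\end{enumerate}
We then use that the resampled configuration $(\gamma_1,\gamma_2,0)$ inside $\Lambda_n$, together with the unchanged outside, keeps $\mathfrak e$ and $\mathfrak f$ separated. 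This holds because $\gamma_1\cup\gamma_2\subseteq\omega_{\Lambda_r^c}$ and $\mathfrak e,\mathfrak f$ are separated there.

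A finite-energy argument then finishes the proof. There are finitely many possible inside configurations, so the map from the original inside configuration to $(\gamma_1,\gamma_2,0)$ changes probabilities by at most a bounded factor. Hence the image event still has positive probability.
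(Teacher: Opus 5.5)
Your proposal is correct and is essentially the paper's argument. The paper conditions on $\mathcal{A}_{\Lambda_n^c}$ and observes that uniqueness of the infinite cluster forces the conditional probability of $\mathfrak e \cc \mathfrak f$ to be $1$ almost surely, while \Cref{lemma:disjoint_wiring_new} makes it $<1$ on $\mathtt{DoubleEvenFlow}_{n,N}\cap 2\mathtt{EndsCross}_{r,n,N}$; your contradiction-by-resampling, with the finite-energy replacement of the inside by $(\gamma_1,\gamma_2,0)$ (whose weight ratio does not depend on the outside), is an explicit rephrasing of that step and correctly handles the fact that the event is not $\mathcal{A}_{\Lambda_n^c}$-measurable.
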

\begin{proof}

Let $\mathfrak{e}\overset{\nn}{\cc} \mathfrak{f}$ denote the event that $\nn$ has two ends which are connected to each other, i.e. for every box $\Lambda$ large enough to separate ends, the two infinite components of $\nn_{\Lambda^c}$ are connected inside $\nn_{\Lambda}$. Denote the $\sigma$-algebra generated by the restriction to $\Lambda_{n}^c$ by  $\mathcal{A}_{\Lambda_n^c}$.
By uniqueness of the infinite component, 
$$
1=\ell_{\mathbb{G}}\otimes \ell_{\mathbb{G}}\otimes \mathbb{P}_{\mathbb{G}}[\mathfrak{e}\overset{\nn}{\cc} \mathfrak{f}]=\ell_{\mathbb{G}}\otimes \ell_{\mathbb{G}}\otimes \mathbb{P}_{\mathbb{G}}[\ell_{\mathbb{G}}\otimes \ell_{\mathbb{G}}\otimes \mathbb{P}_{\mathbb{G}}[\mathfrak{e}\overset{\nn}{\cc} \mathfrak{f}\mid \mathcal{A}_{\Lambda_n^c}]].
$$
The Gibbs property yields $$
\ell_{\mathbb{G}}\otimes \ell_{\mathbb{G}}\otimes \mathbb{P}_{\mathbb{G}}[\mathfrak{e}\overset{\nn}{\cc} \mathfrak{f}\mid \mathcal{A}_{\Lambda_n^c}]=\ell_{\Lambda_n}^{\partial(\eta^1_{\Lambda_n^c})}\otimes \ell_{\Lambda_n}^{\partial(\eta^2_{\Lambda_n^c})}\otimes \mathbb{P}_{\Lambda_n}[\mathfrak{e}\overset{\nn_{\Lambda_n}\cup \nn_{\Lambda_n^c}}{\cc} \mathfrak{f}],
$$
and
we conclude that $\ell_{\Lambda_n}^{\partial(\eta^1_{\Lambda_n^c})}\otimes \ell_{\Lambda_n}^{\partial(\eta^2_{\Lambda_n^c})}\otimes \mathbb{P}_{\Lambda_n}[\mathfrak{e}\overset{\nn_{\Lambda_n}\cup \nn_{\Lambda_n^c}}{\cc} \mathfrak{f}]=1$ for almost every $(\eta^1_{\Lambda_n^c},\eta^2_{\Lambda_n^c},\delta_{\Lambda_n^c})$, since this variable is bounded above by 1.

By \Cref{lemma:disjoint_wiring_new}, $\mathtt{DoubleEvenFlow}_{n,N} \cap 2\mathtt{EndsCross}_{r,n,N}\subseteq \{\ell_{\Lambda_n}^{\partial(\eta^1_{\Lambda_n^c})}\otimes \ell_{\Lambda_n}^{\partial(\eta^2_{\Lambda_n^c})}\otimes \mathbb{P}_{\Lambda_n}[\mathfrak{e}\overset{\nn_{\Lambda_n}\cup \nn_{\Lambda_n^c}}{\cc} \mathfrak{f}]<1\},$ which proves the desired.
\end{proof}

 We are now ready to prove one-endedness of the double random current.

\begin{proof}[Proof of \Cref{thm:double_current_has_one_end}]
The proof in \cite{aizenman2015random} shows that the double random current has a unique infinite component on amenable transitive graphs (if it exists). By tail-triviality or ergodicity, the number of ends of the infinite cluster is almost surely constant. 
Furthermore, this infinite cluster has at most two ends almost surely (see, e.g., \cite[Exercise 7.24]{lyons2017probability}).
Assume for contradiction that it has exactly two ends almost surely.
Our goal is to arrive at a contradiction by proving that  there exist $\varepsilon<\frac{1}{2}$ and $n<N$ such that,
$$1-\varepsilon \leq \ell_{\mathbb{G}}\otimes \ell_{\mathbb{G}}\otimes \mathbb{P}_{\mathbb{G}}[\eta^1\in \mathtt{OddFlow}_{n,N}(\nn)] \leq \frac{1}{2}.$$ Each inequality is proven separately.

\begin{figure}
    \centering
\includegraphics[width=0.7\textwidth]{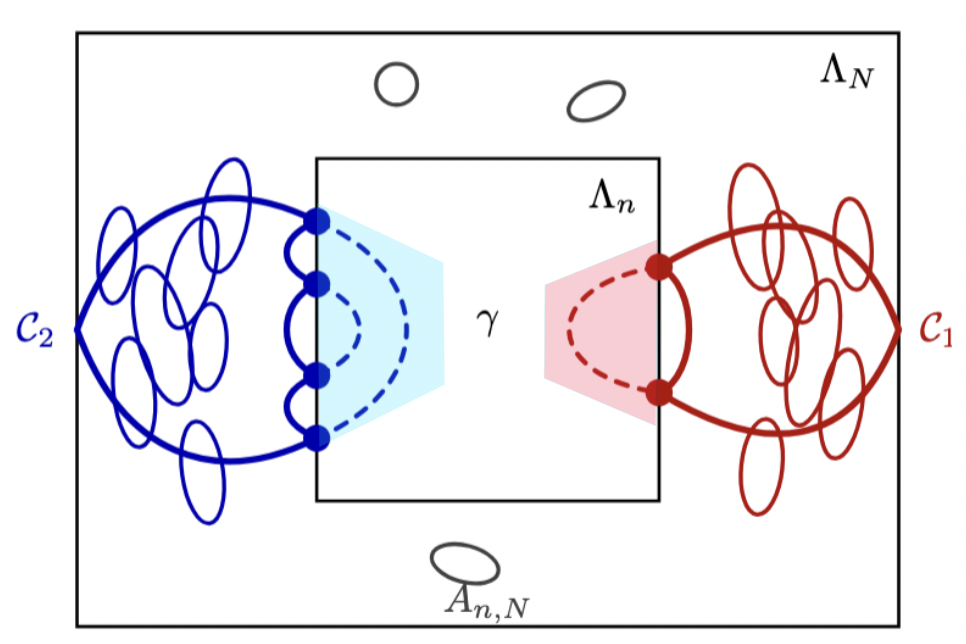}
\caption{The disjoint wiring of the sources of $\eta^1$. Two crossing clusters $\mathcal{C}_1$, $\mathcal{C}_2$ of $\nn$ in $A_{n,N}$ (solid) carry the sources $\partial(\eta^1_{\Lambda_n}) \cap \mathcal{C}_i$ on the inner boundary. The dashed even subgraph $\gamma \subset \Lambda_n$ satisfies $\partial\gamma = \partial(\eta^1_{\Lambda_n})$ and pairs sources within each $\mathcal{C}_i$, so that the crossing clusters are not $\gamma$-connected inside $\Lambda_n$. As explained in the proof of \Cref{lemma:disjoint_wiring_new},  the existence of these paths follows from the existence of two disjoint crossings of a smaller annulus (illustrated here with shadings).}    \label{fig:DW_event}
\end{figure}

\textbf{Lower bound:}
\Cref{lemma:two_crossings} and a union bound show that for every $\varepsilon>0$, we can choose $r$, $n$ and $N$ so large  that $\Prbcur_\mathbb{G}^{\emptyset,\emptyset}[2\mathtt{EndsCross}_{r,n,N}] \geq 1-\varepsilon$.
Using \Cref{lemma:even_flow_is_impossible} and symmetry of $\eta^1$ and $\eta^2$,  
\begin{align*}
    \frac{1}{2} &\leq \ell_{\mathbb{G}}  \otimes \ell_{\mathbb{G}} \otimes \mathbb{P}_{\mathbb{G}}\left[ \eta^1 \in \mathtt{OddFlow}_{n,N}(\nn)\mid 2\mathtt{EndsCross}_{r,n,N} \right] \\
&=\ell_{\mathbb{G}}  \otimes \ell_{\mathbb{G}} \otimes \mathbb{P}_{\mathbb{G}}[\eta^1 \in \Omega_\emptyset^{\infty,\infty}(\nn) \mid 2\mathtt{EndsCross}_{r,n,N}].
\end{align*}
Thus, $\ell_{\mathbb{G}}  \otimes \ell_{\mathbb{G}} \otimes \mathbb{P}_{\mathbb{G}}[\eta^1 \in \Omega_\emptyset^{\infty,\infty}(\nn)] > 0$  and the event is automorphism-invariant, so the probability is 1 by ergodicity (cf. \Cref{lemma:ergodicity}). Thus, by choosing $n,N$ large enough and applying \Cref{lemma:two_crossings},
\begin{align*}
    \ell_\mathbb{G}\otimes \ell_\mathbb{G}\otimes \mathbb{P}_\mathbb{G}[\eta^1 \in \mathtt{OddFlow}_{n,N}(\nn)] \geq \ell_\mathbb{G}\otimes \ell_\mathbb{G}\otimes \mathbb{P}_\mathbb{G}[\eta^1 \in \mathtt{OddFlow}_{n,N}(
    \nn)\mid 2\mathtt{EndsCross}_{r,n,N}] (1-\varepsilon) = 1- \varepsilon.
\end{align*}

\textbf{Upper bound:}
Let $G_k\nearrow \mathbb{G}$ and pick $n < N$ large as before and $M$ so large that $G_M \supset \Lambda_N$, then by \Cref{lemma:source_counting},
\begin{align*}
  \ell_{G_M}\otimes \ell_{G_M}\otimes \mathbb{P}_{G_M}[\eta^1 \in \mathtt{OddFlow}_{n,N}(\nn)]
  &\leq  \ell_{G_M}\otimes \ell_{G_M}\otimes \mathbb{P}_{G_M}[\eta^1 \in \mathtt{OddFlow}_{n,N}(\nn) \mid 2\mathtt{Cross}_{n,N}(\nn)] \\
    &=    \dc_{G_M}[\operatorname{UEG}_{\nn}[\mathtt{OddFlow}_{n,N}(\nn)]\mid 2\mathtt{Cross}_{n,N}(\nn)]
    \leq   \frac{1}{2}.
\end{align*}
As the event $\mathtt{OddFlow}_{n,N}$ is local (it depends only on the annulus $A_{n,N}$), weak convergence of the finite volume measures to their infinite volume counterparts yields
$\ell_{\mathbb{G}}  \otimes \ell_{\mathbb{G}} \otimes \mathbb{P}_{\mathbb{G}}[\mathtt{OddFlow}_{n,N}] \leq \frac{1}{2}.$
\end{proof}

\begin{remark}[One-endedness of $\Prbcur_{\mathbb{G}}^{+,+}$]
The same arguments show one-endedness of the plus-plus double random current $\Prbcur_{\mathbb{G}}^{+,+}$.
Assuming two-endedness, symmetry again yields 
    $
\ell^+_{\mathbb{G}}\otimes \ell^+_{\mathbb{G}}\otimes \mathbb{P}_{\mathbb{G}}[\eta^1 \in \Omega_\emptyset^{\infty,\infty}(\nn)] = 1.
$
Therefore, for $n,N$ large enough, 
     $$
\ell^+_{\mathbb{G}}\otimes \ell^+_{\mathbb{G}}\otimes \mathbb{P}_{\mathbb{G}}[\eta^1 \in \mathtt{OddFlow}_{n,N}(\nn)] \geq 1-\varepsilon.
$$
The upper bound goes through similarly, using that\footnote{A similar statement is also true in the mixed case, so that is not where the problem lies.} $\Prbcur_{G^1}^{\emptyset,\emptyset}[ \operatorname{UEG}_{\nn}[\eta]] = \ell_{G^1}[\eta]$. 

The symmetry of $\eta^1$ and $\eta^2$ seems essential, but must be overcome to prove one-endedness of the mixed current $\Prbcur_{\mathbb{G}}^{\emptyset,+}$. With Raoufi's result in hand, $\Prbcur_{\mathbb{G}}^+ = \Prbcur_{\mathbb{G}}^\emptyset$, which follows from $\phi_{\mathbb{G}}^0 = \phi_{\mathbb{G}}^1$ as explained before \Cref{cor:question}. This restores the symmetry, but it remains of interest to find an argument which circumvents it by other means.
\end{remark}

\begin{remark}
For readability, \Cref{thm:double_current_has_one_end} and \Cref{cor:question} are stated only for the free, nearest-neighbour model. 
The results hold just as well in the long-range setting. That is, for general weights $(x_{vw})_{v,w\in \mathbb{G}}$ satisfying:

\begin{enumerate}
    \item[] $\operatorname{Ferromagnetism}$: $x_{vw}\in [0,1]$ for all $v,w\in \mathbb{G}$
    \item[] $\operatorname{Automorphism \;invariance}$: $x_{\varphi(v)\varphi(w)}=x_{vw}$ for all $v,w\in \mathbb{G}$ and any $\varphi\in \mathrm{Aut}(\mathbb{G})$
    \item[] $\operatorname{One-endedness}$: The graph with edge set $\{vw\mid x_{vw}>0\}$ is one-ended \footnote{And hence, by automorphism invariance, connected.}.
    \item[] $\operatorname{Local\; finiteness}$: For fixed $v\in \mathbb{G},$ $\sum_{v,w}x_{v,w}<\infty.$
\end{enumerate}
Local finiteness ensures that with high probability for sufficiently large annuli $A_{n,N}$ there are no jumps from inside $\Lambda_n$ to outside $\Lambda_N^c$, which makes sure that the odd flow events are still well-defined. 
These weights must be used for both the loop O($1$) model and for the added Bernoulli percolation. 
\end{remark}

\section{Open problems}
Let us record some open problems regarding evens and ends. One-endedness of the random-cluster model follows directly from deletion tolerance and in \Cref{thm:double_current_has_one_end}, we proved one-endedness of the double random current; it is natural to ask about the remaining graphical representations. In contrast to the case for the double random current, we are not aware of any implications for the Ising model of these coarse geometrical properties.
 \begin{question}
     Whenever $\ell_{\Z^d}$ percolates, is the infinite cluster one-ended? What about the single random current?
 \end{question}

Our original approach to the question of one-endedness of the double random current went through the following reasoning that built some of the intuition of the proof, but a proof remains elusive. 
\begin{question}
    Suppose $\mathbb{G}$ is a transitive, amenable graph and $\mu$ and $\nu$ are percolation measures, where the laws of their uniform even subgraphs are equal. Does almost sure one-endedness of $\mu$ imply almost sure one-endedness of $\nu$?
\end{question}

In the ten years that have passed since Duminil-Copin's questions on random currents \cite{DC16}, the questions of conformal invariance of the critical single current on $\Z^2$ and exponential decay of truncated correlations on $\Z^d$ have been solved \cite{chen2023conformal,duminil2020exponential}, while there was only partial progress on some of the other questions \cite{aizenman2021marginal, aizenman2019emergent, duminil2021conformal2,hansen2023uniform}. One that remains is whether or not the random current on $\mathbb{Z}^d$ percolates throughout the supercritical regime. This would be resolved if the following holds:
\begin{question}
    Does the uniform even subgraph of any unimodular, transient graph $\mathbb{G}$ have an infinite component almost surely?
\end{question}

Let us end with an open question that builds on the perspective supported by this paper.
\begin{question}
Which additional qualitative properties of the phase diagram of the Ising model can be related to coarse stochastic geometrical properties of the graphical representations?
\end{question}

\section*{Acknowledgments}
We thank Lorca Heeney for insightful remarks. No data was used for this study and the authors have no relevant conflicts of interest.   FRK was supported by the Carlsberg Foundation, grant CF24-0466. This research was funded in part by the Austrian Science Fund (FWF) 10.55776/P34713.
 Towards the completion of this paper, we became aware of efforts by Gunaratnam, Panagiotis, Panis and Severo  \cite{GPPS}  to prove a stronger version of the main theorem of this paper on $\Z^d.$

\bibliographystyle{abbrv}
\bibliography{bibliography.bib}

\end{document}